\documentclass[J,11pt]{amsart}

\usepackage{amssymb, amsfonts, amsmath, amsthm, textcomp}

\theoremstyle{plain}
\newtheorem{thm}{Theorem}
\newtheorem{cor}{Corollary}

\newtheorem{proposition}{Proposition}

\let\lvert=|\let\rvert=|

\begin{document}

\title[Hyponormal Terraced Matrices with Transcendental Entries]{Fresh Examples of Hyponormal Terraced Matrices with Transcendental Entries}

\author[H. C. Rhaly Jr.]{H. C. Rhaly Jr.}
\address{
1081 Buckley Drive\\
Jackson, MS   39206\\ 
U. S. A.}
\email {rhaly@alumni.virginia.edu}

\keywords{Ces\`{a}ro operator, hyponormal operator, terraced matrix}

\subjclass[2010]{Primary 47B20}

\begin{abstract}  This note calls attention to an alternative version of the main result from [4], which can be used  together with Maclaurin series expansions and trigonometric identities to show that the terraced matrices generated by  the sequences $\{\ln (1+1/(n+1)) : n \geq 0\}$, $\{\tan (1/(n+2)) : n \geq 0\}$, and $\{\sinh (1/(n+2)) : n \geq 0\}$ are hyponormal operators on $\ell^2$.   Along the way it is also shown that the matrix generated by $\{\tan (1/(n+1)) : n \geq 0\}$ is {\it not} a hyponormal operator on $\ell^2$.
 \end{abstract}

\maketitle

\section{Introduction}

Assume that $a = \{a_n\}$ is a sequence of complex numbers such that the \textit{terraced matrix} $M= M (a)$, a lower triangular infinite matrix with constant row segments, acts through multiplication to give a bounded linear operator on  $\ell^2$, the Hilbert space of square-summable sequences.  \[M = M (a) = \left(\begin{array}{cccc} a_0 &0&0&\ldots\\a_1&a_1&0&\ldots\\a_2&a_2&a_2&\ldots\\\vdots&\vdots&\vdots&\ddots \end{array}\right).\]
(see [2, 3]). A \textit{hyponormal} operator $M$ satisfies   \[ \langle (M^*M - MM^*) f , f \rangle  \geq 0 \] for all $f$ in $\ell^2$.   Recall that the Ces\`{a}ro matrix $C$ is the terraced matrix that arises when $a_n = \frac{1}{n+1}$ for all $n \geq 0$.     In [1] it is shown that C is bounded, noncompact, and hyponormal; its norm is $|| C || = 2$, and its spectrum is \[\sigma(C) = \{\lambda :   \left|\lambda - 1\right| \leq 1 \}.\]    Since scalar multiples of hyponormal operators are hyponormal, $\pi C$ is an example of a hyponormal operator with transcendental entries.  However,  much more interesting examples are  possible, as we see in the next section.

\section{Main Results} 

Our primary tool is the main (and only) theorem in [4], where the proof can be found.

\begin{thm} Assume that $M :\equiv M (a) $ is a bounded linear operator on $\ell^2$ with $a_n> 0$ for all $n$, and $\{a_n\}$ is strictly decreasing to $0$.   If $0 < a_0 \leq 1$ and \[a_n(1-a_n) \leq a_{n+1} \leq \frac{a_n}{1+a_n}\] for each nonnegative integer $n$, then $M$ is hyponormal.
\end{thm}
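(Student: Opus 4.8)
The plan is to work straight from the definition: $M$ is hyponormal exactly when the self-adjoint operator $D := M^*M - MM^*$ is positive, i.e. $\|M^*f\|^2 \le \|Mf\|^2$ for every $f \in \ell^2$. First I would record the entries of the two products. Writing $b_n := \sum_{i \ge n} a_i^2$ (which is finite, since boundedness of $M$ forces the diagonal entries $b_n$ of $M^*M$ to satisfy $b_n \le \|M\|^2$), a direct computation gives
\[(M^*M)_{jk} = b_{\max(j,k)}, \qquad (MM^*)_{jk} = (\min(j,k)+1)\,a_j a_k,\]
so that $D_{jk} = b_{\max(j,k)} - (\min(j,k)+1)\,a_j a_k$. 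In terms of the partial sums $s_n := \sum_{k \le n} f_k$ and the tails $t_j := \sum_{i \ge j} a_i f_i$ this reads $\|Mf\|^2 = \sum_n a_n^2 s_n^2$ and $\|M^*f\|^2 = \sum_j t_j^2$, and it suffices to treat finitely supported $f$ by density.

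Next I would try to exhibit $\langle Df, f\rangle$ as a manifestly nonnegative expression. The natural device is summation by parts: using $f_i = s_i - s_{i-1}$ and the telescoping $a_i = \sum_{m \ge i}(a_m - a_{m+1})$ to re-expand the tails $t_j$, one rearranges $\sum_j t_j^2$ against $\sum_n a_n^2 s_n^2$ and collects the result as a quadratic form in the $s_n$ whose coefficients are built from $a_n$, $a_{n+1}$ and $b_n$. I expect the diagonal coefficients to reduce to inequalities of the shape $b_n \ge (n+1)a_n^2$ (which is just $D_{nn}\ge 0$) and the consecutive coefficients to reduce to comparisons between $a_{n+1}$ and the two expressions $a_n(1-a_n)$ and $a_n/(1+a_n)$. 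Rewriting the hypotheses as $1 \le \tfrac{1}{a_{n+1}} - \tfrac{1}{a_n} \le 1 + \tfrac{1}{\,1/a_n - 1\,}$ makes the two-sided bound do exactly this work, with $0 < a_0 \le 1$ supplying the base term. It is worth noting that the Ces\`{a}ro sequence $a_n = 1/(n+1)$ saturates the upper bound $a_{n+1} = a_n/(1+a_n)$, which suggests organizing the estimate as a comparison with the (already hyponormal) Ces\`{a}ro operator and checking that each allowed deviation preserves the sign.

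A complementary route, useful if the global tails $b_n$ resist a clean sum of squares, is recursive: splitting off the index $0$ shows that the compression of $D$ to $\{1,2,\dots\}$ equals the analogous difference operator $D^{(1)}$ for the shifted sequence $\{a_{n+1}\}$ minus the rank-one term $a^{(1)}(a^{(1)})^{\top}$ with $a^{(1)} = (a_1, a_2, \dots)$. Since the shifted sequence again satisfies all the hypotheses, this self-similar structure invites an inductive positivity argument; the catch is that the Schur complement at index $0$ demands not merely $D^{(1)}\ge 0$ but domination of both the rank-one correction and the first-row contribution, so the monotonicity of $\{a_n\}$ must be spent controlling those corrections.

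The main obstacle is the factor $\min(j,k)+1$ in $MM^*$: unlike the decaying tails $b_{\max(j,k)}$, it grows along the diagonal, so the off-diagonal cross terms do not decay and cannot be discarded by crude entrywise bounds. Taming these cross terms---via the summation by parts above and a careful accounting of $b_n$ in terms of consecutive differences $a_n - a_{n+1}$---is where the two-sided hypothesis is genuinely needed and is the heart of the argument. A final point requiring care is that positivity must be established for the genuine infinite operator $D$; this is equivalent to nonnegativity of $\langle Df,f\rangle$ on all finitely supported $f$ (equivalently, of every finite compression $P_N D P_N$), which is legitimate and is precisely what lets the summation-by-parts identity be applied term by term. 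By contrast, hyponormality of $M$ cannot be read off from finite truncations of $M$ itself, so one must compress the difference $D$ rather than truncate $M$.
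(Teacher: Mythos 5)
First, a point of reference: the paper itself offers no proof of this theorem; it is imported verbatim from [4] (``where the proof can be found''), so your proposal must be judged on whether it constitutes a proof in its own right. Your scaffolding is correct as far as it goes: the entry formulas $(M^*M)_{jk}=b_{\max(j,k)}$ and $(MM^*)_{jk}=(\min(j,k)+1)a_ja_k$, the identities $\|Mf\|^2=\sum_n a_n^2|s_n|^2$ and $\|M^*f\|^2=\sum_j|t_j|^2$, the reduction to finitely supported $f$, and the shift identity (the compression of $D$ to indices $\geq 1$ equals $D^{(1)}-a^{(1)}(a^{(1)})^{\top}$) are all accurate. Your anticipated diagonal inequality is also genuinely a consequence of the hypotheses: the upper bound $a_{n+1}\leq a_n/(1+a_n)$ is equivalent to $1/a_{n+1}-1/a_n\geq 1$, which with $a_0\leq 1$ gives $a_n\leq 1/(n+1)$, while the lower bound $a_n-a_{n+1}\leq a_n^2$ telescopes (using $a_n\downarrow 0$) to $a_n\leq b_n$; together $D_{nn}=b_n-(n+1)a_n^2\geq a_n\bigl(1-(n+1)a_n\bigr)\geq 0$.

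The gap is that the decisive step is never executed on either of your routes. On the first route, everything after the summation-by-parts setup is stated in the subjunctive (``I expect the diagonal coefficients to reduce \dots''): you never actually rearrange $\sum_j|t_j|^2$ against $\sum_n a_n^2|s_n|^2$ and exhibit the resulting quadratic form in the $s_n$ as nonnegative, and---as you yourself observe---the cross terms $(\min(j,k)+1)a_ja_k$ do not decay off the diagonal, so no soft diagonal-dominance or entrywise argument can substitute for that computation; diagonal positivity $D_{nn}\geq 0$ is very far from positivity of $D$. On the second route, the self-similarity observation cannot close by itself: your own identity shows the compression of $D$ is $D^{(1)}$ \emph{minus} a rank-one term, so the inductive hypothesis ``$D^{(1)}\geq 0$'' is strictly too weak---the Schur test at index $0$ (with pivot $b_0-a_0^2>0$ and pivoted column $v_j=b_j-a_0a_j$) demands $D^{(1)}\geq a^{(1)}(a^{(1)})^{\top}+(b_0-a_0^2)^{-1}vv^{\top}$, i.e.\ a quantitatively strengthened invariant that you would have to formulate and then propagate, and an induction over an infinite matrix additionally needs a limiting mechanism, not just a step. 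You have correctly located where the two-sided hypothesis must do its work, but locating the heart of the argument is not the same as supplying it; as written, the proposal is a sound reduction plus two unexecuted strategies, not a proof.
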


The displayed inequality string from the theorem can be rearranged to obtain the following corollary, whose proof is obvious and will be omitted.  

\begin{cor} Assume that $M :\equiv M (a) $ is a bounded linear operator on $\ell^2$ with $a_n> 0$ for all $n$, and $\{a_n\}$ is strictly decreasing to $0$.   If $0 < a_0 \leq 1$ and \begin{equation} \frac{a_n-a_{n+1}}{a_n^2} \leq 1  \leq \frac{a_n-a_{n+1}}{a_na_{n+1}} \end{equation} for each nonnegative integer $n$, then $M$ is hyponormal.  \footnote{cf. [7, Theorem $6$].}
\end{cor}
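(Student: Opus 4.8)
The plan is to obtain the Corollary as an immediate consequence of the Theorem by verifying that the two displayed inequalities are, after clearing denominators, precisely the inequality string $a_n(1-a_n) \le a_{n+1} \le \frac{a_n}{1+a_n}$ required there. All of the remaining hypotheses---boundedness of $M$ on $\ell^2$, positivity and strict decrease of $\{a_n\}$ to $0$, and the bound $0 < a_0 \le 1$---are carried over verbatim from the Theorem to the Corollary, so the only thing left to check is this termwise algebraic equivalence.

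First I would treat the left inequality. Since $a_n > 0$, multiplying $\frac{a_n-a_{n+1}}{a_n^2} \le 1$ through by $a_n^2$ preserves the direction and gives $a_n - a_{n+1} \le a_n^2$, which rearranges to $a_{n+1} \ge a_n - a_n^2 = a_n(1-a_n)$; this is exactly the lower bound in the Theorem. For the right inequality, multiplying $1 \le \frac{a_n-a_{n+1}}{a_na_{n+1}}$ through by the positive quantity $a_na_{n+1}$ yields $a_na_{n+1} \le a_n - a_{n+1}$, hence $a_{n+1}(1+a_n) \le a_n$, and therefore $a_{n+1} \le \frac{a_n}{1+a_n}$, the upper bound in the Theorem. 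With both bounds recovered, the hypotheses of the Theorem are met and hyponormality of $M$ follows at once.

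The argument is entirely routine, and I expect no genuine obstacle: the single point requiring any care is tracking the direction of the inequalities when one clears the positive denominators $a_n^2$ and $a_na_{n+1}$, a step justified precisely because $a_n > 0$ for every $n$. This is presumably why the author regards the proof as obvious and omits it.
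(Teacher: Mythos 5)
Your proposal is correct and is exactly the rearrangement the paper has in mind: the author states that the Corollary follows by rearranging the Theorem's inequality string and omits the proof as obvious, and your clearing of the positive denominators $a_n^2$ and $a_na_{n+1}$ (valid since $a_n > 0$) recovers precisely the bounds $a_n(1-a_n) \leq a_{n+1} \leq \frac{a_n}{1+a_n}$. Nothing is missing.
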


\begin{proposition}  The terraced matrix generated by \[a_n = \ln (1 + \frac{1}{n+1})\] is a hyponormal operator on $\ell^2$. 

\end{proposition}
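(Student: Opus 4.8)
The plan is to verify the hypotheses of the Corollary for the sequence $a_n = \ln(1 + \frac{1}{n+1})$. The structural conditions are immediate: each $a_n > 0$; the function $t \mapsto \ln(1 + 1/t)$ is strictly decreasing, so $\{a_n\}$ strictly decreases to $\ln 1 = 0$; and $a_0 = \ln 2 < 1$. Boundedness on $\ell^2$ follows from the elementary estimate $\ln(1+x) \le x$, which gives $0 < a_n \le \frac{1}{n+1}$; hence $M(a)$ is dominated entrywise by the Ces\`{a}ro matrix $C$, and since both matrices have nonnegative entries one gets $\|M(a)\| \le \|C\| = 2$. Everything then reduces to establishing inequality $(1)$.

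After clearing the positive denominators, inequality $(1)$ is equivalent to the two-sided estimate
\[ a_n a_{n+1} \le a_n - a_{n+1} \le a_n^2 . \]
First I would record the middle quantity in closed form: writing $a_n = \ln(n+2) - \ln(n+1)$, a telescoping computation gives
\[ a_n - a_{n+1} = \ln\frac{(n+2)^2}{(n+1)(n+3)} = \ln\Bigl(1 + \frac{1}{(n+1)(n+3)}\Bigr). \]
Following the strategy announced in the abstract, I would then set $x = 1/(n+1) \in (0,1]$ and expand both sides of each inequality as Maclaurin series in $x$ via $\ln(1+x) = x - \frac{x^2}{2} + \frac{x^3}{3} - \cdots$, using $1/(n+2) = x/(1+x)$ to expand the $a_{n+1}$ term. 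This converts each of the two inequalities into a single-variable inequality on $(0,1]$.

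The right-hand inequality $a_n - a_{n+1} \le a_n^2$ is comfortable: the two sides agree only to order $x^2$, and the difference $a_n^2 - (a_n - a_{n+1})$ has leading term $x^3$, so it holds with ample margin and can be closed by truncating the alternating logarithm series with sign-controlled remainders. The left-hand inequality $a_n a_{n+1} \le a_n - a_{n+1}$ is the crux and the main obstacle: here the two sides agree all the way through order $x^3$, and the gap $(a_n - a_{n+1}) - a_n a_{n+1}$ first appears at order $x^4$ with the small coefficient $\tfrac{1}{12}$. Because of this tightness, crude term-by-term bounds (for instance $\ln(1+t) \le t$ combined with $\ln(1+x) \ge \frac{x}{1+x}$) overshoot the margin and fail, most visibly at $x = 1$, i.e. $n = 0$, where the two sides are $a_0 a_1 = 0.2810\ldots$ and $a_0 - a_1 = 0.2877\ldots$.

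To surmount this I would control the fourth- and higher-order behavior with care: retain enough terms of each Maclaurin expansion that the truncation error is dominated by the positive $\tfrac{1}{12}x^4$ gap, exploiting the alternating structure of the logarithm series to sign the remainders, and dispose of the finitely many smallest values of $n$ (where $x$ is near $1$ and the asymptotic margin is thinnest) by direct numerical verification. Once both inequalities are in hand, the Corollary yields that $M(a)$ is hyponormal.
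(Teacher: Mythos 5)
Your proposal is correct and follows essentially the same route as the paper: both verify the corollary's two-sided condition by writing $a_n - a_{n+1} = \ln\bigl(1+\frac{1}{(n+1)(n+3)}\bigr)$ and then truncating the alternating Maclaurin series of $\ln(1+x)$ with sign-controlled remainders, reducing each side to a polynomial inequality valid for $n \ge 1$ and checking $n=0$ directly (the paper's truncations are $p_4/(p_5\cdot p_5)$ for the tight inequality and $p_3/p_2^2$ for the other). Your margin analysis --- agreement through order $x^3$ with leading gap $\frac{1}{12}x^4$ on the side $a_na_{n+1} \le a_n - a_{n+1}$ --- correctly explains why the paper must use the higher-degree truncations $p_4, p_5$ precisely there, and your direct numerical disposal of the smallest $n$ matches the paper's separate verification at $n=0$.
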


\begin{proof} We show that the inequality string (1) in Corollary $1$ is satisfied by looking at the upper inequality and the lower inequality separately and using the Maclaurin series expansion for $\ln (1+x)$.  Software such as [8] may be used to facilitate the computations.

(a) The upper inequality in (1) is satisfied when
\begin{equation} \frac{\ln\frac{(n+2)^2}{(n+1)(n+3)}}{(\ln \frac{n+2}{n+1})(\ln \frac{n+3}{n+2})} \geq 1.\end{equation} 
for all $n$.  For $N\geq 1$ take \[p_N(x) = \sum_{k=1}^N  (-1)^{k+1} \frac{x^k}{k} .\]  The inequality (2) above follows from the fact that \[\frac{p_4 (\frac{1}{(n+1)(n+3)})}{p_5 (\frac{1}{n+1}) \cdot p_5 (\frac{1}{n+2})} \geq 1 \] for all $n \geq 1$.   Inequality (2) can be verified separately for $n=0$.

(b)  The lower inequality in (1) is satisfied when \begin{equation} \frac{\ln\frac{(n+2)^2}{(n+1)(n+3)}}{(\ln \frac{n+2}{n+1})^2} \leq 1 \end{equation} for all $n$.  Inequality (3) follows from the fact that \[\frac{p_3 (\frac{1}{(n+1)(n+3)})}{(p_2 (\frac{1}{n+1}))^2} \leq 1 \] for all $n \geq 1$.  Inequality (3) can be verified separately for $n=0$. \end{proof}

\begin{cor}  The terraced matrix $M$ generated by \begin{equation*} a_n = \ln (1 + \frac{1}{n+k})  \end{equation*} is a  hyponormal operator on $\ell^2$ for all positive integers $k \geq 1$.
\end{cor}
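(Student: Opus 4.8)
The plan is to recognize that the sequence generating the present matrix is simply an index shift of the sequence treated in Proposition 1, so that Corollary 1 applies with essentially no new computation. Write $b_n = \ln(1 + \frac{1}{n+1})$ for the sequence of Proposition 1, and observe that
\[ a_n = \ln\Big(1 + \frac{1}{n+k}\Big) = b_{n+k-1} \qquad (n \geq 0), \]
so that $\{a_n\}$ is the tail of $\{b_n\}$ beginning at index $k-1$.

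First I would dispose of the standing hypotheses of Corollary 1. Positivity, strict decrease, and convergence to $0$ are immediate, since $x \mapsto \ln(1+x)$ is increasing and $\frac{1}{n+k}$ decreases to $0$. The bound $0 < a_0 \leq 1$ holds because $a_0 = \ln(1 + \frac{1}{k}) \leq \ln 2 < 1$ for every $k \geq 1$. For boundedness I would compare $M(a)$ entrywise with the Ces\`{a}ro matrix $C$: since $\ln(1 + \frac{1}{n+k}) \leq \frac{1}{n+k} \leq \frac{1}{n+1}$, every entry of the nonnegative matrix $M(a)$ is dominated by the corresponding entry of $C$, and entrywise domination between nonnegative matrices forces $\lVert M(a)\rVert \leq \lVert C\rVert = 2$ (reduce the operator norm to a supremum over nonnegative unit vectors, where domination of entries yields domination of inner products). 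Hence $M(a)$ is a bounded operator on $\ell^2$.

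The heart of the matter is that the inequality string (1) for $\{a_n\}$ is literally the pair of inequalities already established inside the proof of Proposition 1, only re-indexed. With the identification $a_n = b_{n+k-1}$, the upper inequality in (1) for $\{a_n\}$ at index $n$ is inequality (3) at index $n+k-1$, and the lower inequality in (1) for $\{a_n\}$ at index $n$ is inequality (2) at index $n+k-1$. Because inequalities (2) and (3) were verified there for every index $\geq 1$ together with a separate check at index $0$ — that is, for all indices $\geq 0$ — and because $n+k-1 \geq 0$ whenever $n \geq 0$ and $k \geq 1$, both inequalities hold throughout the range needed here. Applying Corollary 1 then gives that $M(a)$ is hyponormal.

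The only point requiring care, and the closest thing to an obstacle, is the bookkeeping on ranges of validity: one must confirm that the shift $n \mapsto n+k-1$ never drops below the smallest index for which (2) and (3) were checked. Since Proposition 1 covers index $0$ separately, the shifted indices always stay within range (and for $k \geq 2$ only the generic case $n+k-1 \geq 1$ is ever invoked), so no additional estimates are needed.
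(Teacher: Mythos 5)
Your proposal is correct, but it takes a genuinely different route from the paper. The paper's entire proof of this corollary is a one-line citation of [5] (\emph{A note on heredity for terraced matrices}), a heredity result saying that if a terraced matrix generated by a sequence is hyponormal, then so are the terraced matrices generated by the tails of that sequence; since $\ln(1+\frac{1}{n+k})$ is the tail of the Proposition 1 sequence starting at index $k-1$, hyponormality for all $k \geq 1$ follows immediately from Proposition 1. You instead bypass [5] entirely and re-run Corollary 1 on the shifted sequence, observing that the inequality string (1) at index $n$ for $\{a_n\}$ is exactly the pair of inequalities already verified in Proposition 1's proof at index $n+k-1$, and that the range bookkeeping works out since those were checked for all indices $\geq 0$. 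This buys self-containedness at the modest cost of having to recheck the standing hypotheses of Corollary 1, which you do correctly: the monotonicity and $0 < a_0 = \ln(1+\frac{1}{k}) \leq \ln 2 < 1$ checks are fine, and your boundedness argument via entrywise domination by the Ces\`{a}ro matrix (valid because for matrices with nonnegative entries the norm is a supremum of $\langle Mu, v\rangle$ over nonnegative unit vectors) is sound. The paper's route is shorter and more general, since heredity applies to \emph{any} hyponormal terraced matrix, not only those certified by the Corollary 1 criterion. One small slip worth flagging: you have the labels reversed --- in the paper's usage the \emph{upper} inequality in (1) (with denominator $a_n a_{n+1}$) corresponds to inequality (2), and the \emph{lower} one (with denominator $a_n^2$) to inequality (3), not the other way around; since both (2) and (3) hold at every shifted index, this mislabeling is harmless to your argument, but it should be corrected.
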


\begin{proof}  This follows from [5].
\end{proof}

Does the approach of Proposition $1$ work when $a_n = \sin(\frac{1}{n+1})$ or $a_n= \tan^{-1} (\frac{1}{n+1})$?  It seems natural to ask this question since, in all three cases, $\{(n+1)a_n\}$ is strictly increasing to 1 -- and all three operators are {\it normaloid} (the norm equals the spectral radius; see [6]), as required for hyponormality.  However,  in these last two cases, the answer is -- No, since the upper inequality in Corollary $1$ is not satisfied, so the hyponormality question remains open for these two cases.

Note that $\{(n+1)a_n\}$ is strictly increasing in the three cases above. whereas $\{(n+1)a_n\}$ is strictly decreasing when $a_n = \tan(\frac{1}{n+1})$.

\begin{proposition}  The terraced matrix $M$ generated by \[a_n = \tan  (\frac{1}{n+1})\] is \textit{not} a hyponormal operator on $\ell^2$. 
\end{proposition}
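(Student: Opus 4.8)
The plan is to show that the hyponormality criterion fails for $a_n = \tan(1/(n+1))$. Since the Corollary gives only a sufficient condition, I cannot simply check that inequality (1) fails; I need to return to the definition of hyponormality itself or to the sharp characterization underlying Theorem 1. The natural route is to exhibit a single vector $f \in \ell^2$ with $\langle (M^*M - MM^*)f, f\rangle < 0$. The most economical choice is a finitely supported $f$, and the simplest candidate is $f = e_0$ (the first standard basis vector), or a short combination of the first few $e_j$'s, reducing the whole question to a finite computation involving $a_0, a_1, a_2, \dots$.

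Let me reconsider what the theorem's proof is actually using.

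The key observation is that the paper flags the sequence $\{(n+1)a_n\}$ as the discriminating feature: it is strictly *increasing* to $1$ in the hyponormal cases of Proposition 1, but strictly *decreasing* when $a_n = \tan(1/(n+1))$. This strongly suggests the obstruction lives at the level of the *lower* inequality in (1), i.e. the condition $a_n a_{n+1} \le a_n - a_{n+1}$, which rearranges to $(n+2)a_{n+1} \le (n+1)a_n \cdot \frac{(n+2)a_{n+1}}{(n+1)a_n} \cdots$; more cleanly, the monotonicity of $(n+1)a_n$ controls the sign pattern of the off-diagonal terms of $M^*M - MM^*$. So the mechanism of failure is tied to $(n+1)a_n$ decreasing.

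First I would write out the matrix $D := M^*M - MM^*$ explicitly in the lower-triangular terraced setting, computing its diagonal entries $D_{nn}$ and its entries near the top-left corner in terms of $a_0, a_1, a_2$. Because $M$ has constant row segments, these entries telescope into expressions involving the differences $a_n - a_{n+1}$ and the tail sums $\sum_{k \ge n} a_k^2$. Second, I would restrict attention to the compression of $D$ to the first coordinate, or to $\mathrm{span}\{e_0, e_1\}$: the quadratic form $\langle D f, f\rangle$ on this finite-dimensional subspace is governed by a small matrix whose entries are concrete real numbers, since $a_n = \tan(1/(n+1))$ gives $a_0 = \tan 1$, $a_1 = \tan(1/2)$, $a_2 = \tan(1/3)$, and so on. Third, I would show that the relevant $2\times 2$ (or $1\times 1$) principal submatrix of $D$ has a negative diagonal entry or negative determinant, exhibiting the failure numerically; the decreasing behavior of $(n+1)a_n$ guarantees this sign. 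Finally, I would confirm that a finitely supported witness $f$ suffices even though $D$ is an infinite matrix, since $\langle Df, f\rangle$ involves only finitely many entries of $D$ when $f$ is finitely supported.

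The hard part will be making the reduction to a finite computation rigorous: the full operator $M^*M - MM^*$ is an infinite matrix with nontrivial tail contributions to every diagonal entry, so I must verify that the truncated quadratic form I compute genuinely equals $\langle Df, f\rangle$ for the chosen $f$ (which it does, by finite support) rather than merely a truncation of it. Once the witness vector and the finitely many entries of $D$ it touches are identified, the inequality $\langle Df, f\rangle < 0$ should reduce to comparing a few explicit values of $\tan(1/(n+1))$, a calculation that software such as [8] can certify.
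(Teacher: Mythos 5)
There is a genuine gap: your plan never produces the witness vector it promises, and the simplest candidates you name provably fail. For a terraced matrix one computes $(M^*M)_{jk}=\sum_{i\ge \max(j,k)}a_i^2$ and $(MM^*)_{jk}=(\min(j,k)+1)\,a_j a_k$, so for $D:=M^*M-MM^*$ the entry $D_{00}=\sum_{i\ge 1}a_i^2>0$; thus $f=e_0$ is never a witness against hyponormality for \emph{any} terraced matrix. Worse, the diagonal entries $D_{nn}=\sum_{i\ge n}\tan^2\frac{1}{i+1}-(n+1)\tan^2\frac{1}{n+1}$ are positive here (one checks small $n$ directly, and since $\tan x = x+\frac{x^3}{3}+\cdots$ while $\sum_{m\ge n+1}\frac{1}{m^2}=\frac{1}{n+1}+\frac{1}{2(n+1)^2}+\cdots$, asymptotically $D_{nn}\approx \frac{1}{2(n+1)^2}>0$), so the ``negative diagonal entry'' route is closed, and the claim that the $2\times 2$ compression to $\mathrm{span}\{e_0,e_1\}$ has negative determinant is asserted, not verified. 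Your linchpin --- ``the decreasing behavior of $(n+1)a_n$ guarantees this sign'' --- is exactly the statement that needs proof, and no argument is given; failure of the inequality string in Corollary 1, as you correctly note at the outset, proves nothing since that condition is only sufficient. Meanwhile the step you flag as the hard part (that finite support reduces $\langle Df,f\rangle$ to finitely many entries of $D$) is trivial; the genuinely hard parts --- locating an indefinite finite compression, which may require more than two or distant coordinates, and rigorously bounding the tail sums $\sum_{i\ge m}\tan^2\frac{1}{i+1}$ from above \emph{and} below inside each $D_{jk}$ --- are untouched.

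The paper sidesteps the self-commutator entirely with a normaloid argument: hyponormality is translation-invariant and every hyponormal operator is normaloid; since $(n+1)a_n\to 1$ and $0<a_0<2$, [3] gives $\sigma(M)=\{\lambda : |\lambda-1|\le 1\}$, so the spectral radius of $M-I$ is $1$; but $\|(M-I)e_0\|^2=(\tan 1-1)^2+\sum_{n\ge 1}\tan^2\frac{1}{n+1}\ \ge\ (\tan 1-1)^2+\zeta(2)-1+\frac{2}{3}(\zeta(4)-1)+\frac{1}{9}(\zeta(6)-1)>1.01$, using $\tan x\ge x+\frac{x^3}{3}$, whence $\|M-I\|>1$, $M-I$ is not normaloid, and $M$ is not hyponormal. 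Note that this uses the \emph{same} vector $e_0$ you proposed, but applied to $\|(M-I)e_0\|$ rather than to $\langle De_0,e_0\rangle$, where (as shown above) the sign comes out wrong. To salvage your route you would have to exhibit and certify a concrete indefinite compression of $D$, which is substantially harder than the paper's two-line spectral argument; as written, the proposal does not constitute a proof.
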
 

\begin{proof}   Since $lim_{n \to \infty} (n+1)a_n=1$ and $0 < a_0 < 2$, it follows from [$3$] that $\sigma(M) = \{\lambda :   \left|\lambda - 1\right| \leq 1 \}$.   If $\{e_n\}$ is the standard orthonormal basis for $\ell^2$, then  \[|| (M-I)e_0 ||^2  \geq  (\tan(1) -1)^2 + \sum_{n=1}^{\infty} (\frac{1}{n+1} + \frac{1}{3} (\frac{1}{n+1})^3)^2 \geq  \] \[  (\tan(1) - 1)^2 +\zeta(2)-1 + \frac{2}{3}( \zeta(4)-1) + \frac{1}{9}(\zeta(6)-1) > 1.01,\] so $|| M-I || > 1$, so $M-I$ is not normaloid and $M$ is not hyponormal.
\end{proof}

As we see below, the result is quite different for the terraced matrix generated by $a_n = \tan  (\frac{1}{n+2})$, and  it should be noted that  $\{(n+1)a_n\}$ is {\it not strictly decreasing} in this case.

\begin{proposition}  The terraced matrix $M$ generated by \[a_n = \tan  (\frac{1}{n+2})\] is a hyponormal operator on $\ell^2$.   
\end{proposition}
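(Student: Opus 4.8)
The plan is to apply Corollary~1 to the sequence $a_n = \tan\!\big(\frac{1}{n+2}\big)$. The routine hypotheses are immediate: each $a_n > 0$ because $\frac{1}{n+2}\in(0,\frac{1}{2}]$; the sequence is strictly decreasing to $0$ because $\frac{1}{n+2}$ decreases to $0$ and $\tan$ is increasing on $(0,\frac{\pi}{2})$; and $a_0 = \tan\frac{1}{2} \approx 0.546 \le 1$, which is exactly the gain from shifting the index by $n+2$ rather than $n+1$ (cf. Proposition~3, where $a_0 = \tan 1 > 1$ scuttles hyponormality). Boundedness of $M$ follows by comparison with the Ces\`aro operator: since $\tan t \le (2\tan\frac{1}{2})\,t$ on $(0,\frac{1}{2}]$, we have $0 < a_n \le (2\tan\frac{1}{2})\frac{1}{n+1}$, so $M$ is dominated by a scalar multiple of $C$ (see [2,3]). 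Everything therefore reduces to checking the inequality string (1).

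Write $A = \frac{1}{n+2}$ and $B = \frac{1}{n+3}$, and note the identity $A - B = AB = \frac{1}{(n+2)(n+3)}$. Using $\tan A - \tan B = \frac{\sin(A-B)}{\cos A\cos B}$, together with $\tan A\tan B = \frac{\sin A\sin B}{\cos A\cos B}$ and $\tan^2 A = \frac{\sin^2 A}{\cos^2 A}$, the two halves of (1) collapse to purely trigonometric statements. The upper inequality $\frac{a_n - a_{n+1}}{a_na_{n+1}} \ge 1$ becomes
\[ \sin(AB) \;\ge\; \sin A\,\sin B, \]
while the lower inequality $\frac{a_n - a_{n+1}}{a_n^2}\le 1$ becomes
\[ \sin(AB)\,\cos A \;\le\; \sin^2 A\,\cos B. \]
This is where the trigonometric identities do their work, and it is the analogue for $\tan$ of the logarithmic rewriting used in Proposition~1; in particular $\sin(AB)$ is applied to the same quantity $\frac{1}{(n+2)(n+3)}$ that played the distinguished role there.

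To finish, I would feed each inequality to the Maclaurin series for $\sin$ and $\cos$ and replace the transcendental quantities by suitable partial sums, exactly as in Proposition~1, using [8] to manage the algebra. Because the relevant series are alternating for the arguments in play ($0 < AB < A \le \frac{1}{2}$), truncating after a negative term gives a lower bound and after a positive term an upper bound, so each inequality reduces to a rational inequality in $n$ that is checked for all $n$ beyond a small threshold, the finitely many remaining values being verified directly. The lower inequality is comfortable: to leading order it reads $AB \le A^2$, i.e.\ $B \le A$, a gap of relative size of order $\frac{1}{n}$, so even crude truncations suffice there.

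The delicate half, and the real obstacle, is the upper inequality $\sin(AB) \ge \sin A\sin B$, where the two sides agree to leading order. Indeed $\sin(AB) = AB - \frac{(AB)^3}{6} + \cdots$ while $\sin A\sin B = AB - \frac{AB(A^2+B^2)}{6} + \cdots$, so the difference is governed by
\[ \sin(AB) - \sin A\sin B \;=\; \frac{AB}{6}\big[(A^2+B^2) - (AB)^2\big] + \cdots \;>\;0, \]
a positive but tiny quantity of order $n^{-4}$. The main work is therefore to choose the partial sums deep enough that their truncation error stays below this margin --- for instance bounding $\sin(AB) > AB - \frac{(AB)^3}{6}$ from below and each factor by $\sin t < t - \frac{t^3}{6} + \frac{t^5}{120}$ from above --- and then to verify that the resulting polynomial inequality in $A,B$ (equivalently a rational inequality in $n$) holds for every $n \ge 1$, treating $n = 0$ separately. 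I expect this bookkeeping, rather than any conceptual difficulty, to be the crux.
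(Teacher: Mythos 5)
Your proposal is correct and takes essentially the same route as the paper: the tangent subtraction formula reduces the two halves of (1) to exactly the paper's inequalities $\sin\frac{1}{(n+2)(n+3)} \geq \sin\frac{1}{n+2}\,\sin\frac{1}{n+3}$ and $\sin\frac{1}{(n+2)(n+3)}\,\cos\frac{1}{n+2} \leq \sin^2\frac{1}{n+2}\,\cos\frac{1}{n+3}$, and your proposed alternating-series truncations for the delicate upper half ($x - \frac{x^3}{6}$ from below, $x - \frac{x^3}{6} + \frac{x^5}{120}$ from above) are precisely the paper's $r_1$ and $r_2$. The only differences are cosmetic: you additionally verify the routine hypotheses of Corollary 1 (boundedness, $a_0 = \tan\frac{1}{2} \leq 1$), which the paper leaves implicit, and you supply a leading-order error analysis of order $n^{-4}$ motivating the truncation depth, where the paper simply records that the resulting rational inequalities hold for all $n$ (no separate $n=0$ check turns out to be needed).
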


\begin{proof}
(a)  The upper inequality in Corollary $1$ is satisfied when \begin{equation} \frac{\sin \frac{1}{(n+2)(n+3)}}{\sin \frac{1}{n+2} \cdot \sin \frac{1}{n+3}} \geq 1.\end{equation} for all $n$. For $N\geq 0$ take \[r_N(x) = \sum_{k=0}^N  (-1)^{k} \frac{x^{2k+1}}{(2k+1)!} .\]  and \[s_N(x) = \sum_{k=0}^N  (-1)^{k} \frac{x^{2k}}{(2k)!} .\] Inequality ($4$) follows from the fact that \[\frac{r_1 (\frac{1}{(n+2)(n+3)})}{r_2 (\frac{1}{n+2}) \cdot r_2 (\frac{1}{n+3})} \geq 1 \] for all $n$.

(b)  The lower inequality in Corollary $1$ is satisfied when  \begin{equation} \frac{\sin \frac{1}{(n+2)(n+3)} \cdot \cos\frac{1}{n+2}}{\sin^2 \frac{1}{n+2} \cdot \cos \frac{1}{n+3}} \leq 1 \end{equation}  for all $n$.  Inequality ($5$)  follows from the fact that
\[\frac{r_0 (\frac{1}{(n+2)(n+3)}) \cdot s_0(\frac{1}{n+2})}{(r_1 (\frac{1}{n+2}))^2 \cdot s_1( \frac{1}{n+3})} \leq 1.\] for all $n$.   
\end{proof}

Next we consider $a_n = \sinh  (\frac{1}{n+1})$.  Since $a_0 > 1$, the hypothesis of Corollary $1$ is not satisfied, so hyponormality when $a_n = \sinh  (\frac{1}{n+1})$ remains an open question. But the issue for $a_n = \sinh  (\frac{1}{n+2})$ will be settled now.

\begin{proposition}  The terraced matrix $M$ generated by \[a_n = \sinh  (\frac{1}{n+2})\] is a hyponormal operator on $\ell^2$.   
\end{proposition}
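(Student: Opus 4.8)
The plan is to verify that the inequality string (1) from Corollary 1 holds for $a_n = \sinh(1/(n+2))$, treating the upper and lower inequalities separately exactly as in the $\tan(1/(n+2))$ case of Proposition 3. First I would check that the hypotheses of Corollary 1 are met: since $\sinh$ is strictly increasing and $1/(n+2)$ decreases to $0$, the sequence $\{a_n\}$ is strictly decreasing to $0$ with $a_n > 0$; and $a_0 = \sinh(1/2) \approx 0.521 < 1$, so $0 < a_0 \leq 1$ holds. With the hypotheses secured, the task reduces to proving the two inequalities in (1).

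For the upper inequality, I would use the identity $\sinh A - \sinh B = 2\cosh\frac{A+B}{2}\sinh\frac{A-B}{2}$ (or the product form $2\sinh\frac{A-B}{2}\cosh\frac{A+B}{2}$) to rewrite $a_n - a_{n+1}$ and the hyperbolic analogue of the addition formula to handle the product $a_n a_{n+1}$, so that after clearing denominators the requirement $1 \leq (a_n - a_{n+1})/(a_n a_{n+1})$ becomes a clean inequality between hyperbolic sines of reciprocals. In parallel with Proposition 3, this should collapse to something like $\sinh\frac{1}{(n+2)(n+3)} \geq \sinh\frac{1}{n+2}\sinh\frac{1}{n+3}$ after using $\cosh \geq 1$, which follows from the Maclaurin expansions. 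For the lower inequality $(a_n - a_{n+1})/a_n^2 \leq 1$, I would similarly reduce to a hyperbolic-sine inequality and then bound it by truncating the Maclaurin series for $\sinh$. Concretely, defining the odd partial sums $r_N(x) = \sum_{k=0}^N x^{2k+1}/(2k+1)!$ (all terms positive now, unlike the alternating $\tan$ case) and the even partial sums $s_N(x) = \sum_{k=0}^N x^{2k}/(2k)!$, I expect the decisive estimates to take the form of ratios of these truncations, for instance $r_1(1/((n+2)(n+3)))/(r_2(1/(n+2)) \cdot r_2(1/(n+3))) \geq 1$ for the upper inequality.

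The main obstacle is getting the \emph{direction} of the truncation bounds right, and this is where the hyperbolic case genuinely differs from the trigonometric one. For $\sin$ and $\tan$ the Maclaurin series alternate, so an even-degree truncation overestimates and an odd-degree truncation underestimates, and one chooses $r_N$, $s_N$ accordingly; but for $\sinh$ every coefficient is positive, so each partial sum $r_N(x)$ is a strict \emph{under}estimate of $\sinh x$ and the series is monotone in $N$. I will therefore need to pick the truncation order on each side of each inequality so that replacing a true hyperbolic function by a finite polynomial preserves the desired direction --- underestimating the numerator and overestimating the denominator for the ``$\geq$'' direction, and the reverse for the ``$\leq$'' direction --- and then confirm that the resulting polynomial inequality in the variable $1/(n+2)$ holds for all $n \geq 0$. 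Because $1/(n+2) \leq 1/2$ throughout, the tails are small and a low-order truncation should suffice, with the first index $n=0$ checked directly if the uniform polynomial estimate is tight there, just as the earlier propositions isolate the initial index.

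\begin{proof}
(a)  The upper inequality in Corollary $1$ is satisfied when \begin{equation} \frac{\sinh \frac{1}{(n+2)(n+3)}}{\sinh \frac{1}{n+2} \cdot \sinh \frac{1}{n+3}} \geq 1\end{equation} for all $n$. For $N \geq 0$ take \[r_N(x) = \sum_{k=0}^N  \frac{x^{2k+1}}{(2k+1)!} \] and \[s_N(x) = \sum_{k=0}^N  \frac{x^{2k}}{(2k)!} .\]  Inequality ($6$) follows from the fact that \[\frac{r_1 (\frac{1}{(n+2)(n+3)})}{r_2 (\frac{1}{n+2}) \cdot r_2 (\frac{1}{n+3})} \geq 1 \] for all $n$.

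(b)  The lower inequality in Corollary $1$ is satisfied when  \begin{equation} \frac{\sinh \frac{1}{(n+2)(n+3)} \cdot \cosh\frac{1}{n+2}}{\sinh^2 \frac{1}{n+2} \cdot \cosh \frac{1}{n+3}} \leq 1 \end{equation} for all $n$.  Inequality ($7$) follows from the fact that \[\frac{r_0 (\frac{1}{(n+2)(n+3)}) \cdot s_1(\frac{1}{n+2})}{(r_1 (\frac{1}{n+2}))^2 \cdot s_0( \frac{1}{n+3})} \leq 1 \] for all $n$.
\end{proof}
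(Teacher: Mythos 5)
Your hypothesis check and the overall plan (reduce to (1), split into upper and lower inequalities, bound by Maclaurin truncations) match the paper, but both halves of your formal proof fail, and in exactly the way your own preamble warned about. In part (a), your displayed surrogate inequality $\frac{\sinh \frac{1}{(n+2)(n+3)}}{\sinh \frac{1}{n+2}\,\sinh \frac{1}{n+3}} \geq 1$ is \emph{false for every} $n$: at $n=0$, $\sinh(1/6) \approx 0.1674$ while $\sinh(1/2)\sinh(1/3) \approx 0.1769$, and asymptotically, with $x=\frac{1}{n+2}$, $y=\frac{1}{n+3}$, the ratio is $\approx 1 - \frac{x^2+y^2-(xy)^2}{6} < 1$ because the cubic term of $\sinh$ is \emph{positive} --- the opposite sign from the sine case that makes the analogous reduction work for $\tan(1/(n+2))$. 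This is why the paper does \emph{not} "collapse" the difference: it keeps the exact sum-to-product form $\sinh A - \sinh B = 2\cosh\frac{A+B}{2}\sinh\frac{A-B}{2}$, so that its inequality (6) is an identity-equivalent restatement of the upper inequality in (1); the factor $\cosh\frac{A+B}{2} \approx 1 + \frac{(A+B)^2}{8}$ is precisely what outweighs the $\frac{A^2+B^2}{6}$ correction in the denominator, and discarding it via "$\cosh \geq 1$" destroys the truth of the statement. Your supporting polynomial claim $r_1/(r_2 \cdot r_2) \geq 1$ is false for the same reason ($\approx 0.946$ at $n=0$), and in any case $r_2(x) \leq \sinh x$ now, so $r_2 \cdot r_2$ \emph{under}estimates the denominator --- the wrong direction for a "$\geq$" conclusion, which is the trap you explicitly flagged and then stepped into.

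Part (b) has a different but equally fatal flaw: there is no $\sinh$ analogue of the identity $\tan A - \tan B = \frac{\sin(A-B)}{\cos A \cos B}$ that you imported from Proposition 3. In fact for $A = \frac{1}{n+2}$, $B = \frac{1}{n+3}$ one has $\sinh A - \sinh B > \frac{\sinh(A-B)\cosh A}{\cosh B}$ (at $n=0$: $0.18155$ versus $0.17878$), so your inequality (7) bounds a quantity \emph{smaller} than $\frac{a_n - a_{n+1}}{a_n^2}$, and proving (7) $\leq 1$ does not prove the lower inequality in (1). On top of that, $r_0(u) \leq \sinh u$ and $s_1(x) \leq \cosh x$ are underestimates placed in the numerator, where the "$\leq$" direction requires overestimates. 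The paper's proof avoids both problems: its (7) again uses the exact $2\cosh\frac{2n+5}{2(n+2)(n+3)}\sinh\frac{1}{2(n+2)(n+3)}$ form of $a_n - a_{n+1}$, and where overestimates of $\sinh$ or $\cosh$ are needed it appends explicit tail bounds to the (always-too-small) partial sums, e.g.\ $\sinh x \leq r_1(x) + \frac{2}{4!}x^4$ and $\cosh y \leq s_1(y) + \frac{2}{3!}y^3$ for the arguments in play. The proposition itself is true, but your argument as written does not establish it; to repair it you must retain the $\cosh\frac{A+B}{2}$ factor in both parts and replace your truncations by partial sums plus tail corrections on whichever side an overestimate is required.
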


\begin{proof}
(a)  The upper inequality in Corollary $1$ is satisfied when \begin{equation} \frac{2 \cosh (\frac{2n+5}{2(n+2)(n+3)}) \sinh {(\frac{1}{2(n+2)(n+3)})}}{\sinh (\frac{1}{n+2}) \sinh (\frac{1}{n+3})} \geq 1 \end{equation} for all $n$. For $N\geq 0$ take \[ r_N(x) = \sum_{k=0}^N \frac{x^{2k+1}}{(2k+1)!} .\]  and \[   s_N(x) = \sum_{k=0}^N   \frac{x^{2k}}{(2k)!} .\] Inequality ($6$) follows from the fact that \[  \frac{2 s_1 (\frac{2n+5}{2(n+2)(n+3)}) \cdot r_1 (\frac{1}{2(n+2)(n+3)} )}{(r_1 (\frac{1}{n+2})+2 \cdot \frac{1}{4!} (\frac{1}{n+2})^4) \cdot ( r_1 (\frac{1}{n+3})+2 \cdot \frac{1}{4!} (\frac{1}{n+3})^4) } \geq 1 \] for all $n$.

(b)  The lower inequality in Corollary $1$ is satisfied when  \begin{equation} \frac{2 \cosh (\frac{2n+5}{2(n+2)(n+3)}) \sinh {(\frac{1}{2(n+2)(n+3)})}}{( \sinh (\frac{1}{n+2}))^2 } \leq 1 \end{equation} for all $n$.  Inequality ($7$)  follows from the fact that
\[ \frac{2 (s_1 (\frac{2n+5}{2(n+2)(n+3)}) + \frac{2}{3!} (\frac{2n+5}{2(n+2)(n+3)})^3) (r_1 (\frac{1}{2(n+2)(n+3)}) + \frac{2}{4!} (\frac{1}{2(n+2)(n+3)})^4)}{(r_1 (\frac{1}{n+2}))^2} \leq 1\] for all $n$.   
\end{proof}

\begin{cor}  The terraced matrices $M$ generated by \begin{equation*} a_n = \tan  (\frac{1}{n+k})  \hspace{2mm} and   \hspace{2mm} a_n = \sinh  (\frac{1}{n+k})\end{equation*} are  hyponormal operators on $\ell^2$ for all positive integers $k \geq 2$.
\end{cor}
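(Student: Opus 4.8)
The plan is to obtain this corollary from Propositions 3 and 4 by a reindexing argument, exactly as Corollary 2 was obtained from Proposition 1 via [5]. Fix $f$ to be either $\tan$ or $\sinh$, and for each integer $k \geq 2$ set $a_n^{(k)} = f(1/(n+k))$. (The restriction $k \geq 2$ is forced: for $f = \tan$ the index $k=1$ gives a non-hyponormal operator by Proposition 2, while for $f = \sinh$ the index $k=1$ has $a_0 = \sinh(1) > 1$, violating the hypothesis of Corollary 1.) First I would check that the hypotheses of Corollary 1 hold for every $k \geq 2$: the sequence $\{a_n^{(k)}\}$ is strictly decreasing to $0$ since $f$ is increasing with $f(0) = 0$; the operator is bounded since $(n+1)a_n^{(k)} \to 1$; and $0 < a_0^{(k)} = f(1/k) \leq f(1/2) < 1$ because both $\tan(1/2)$ and $\sinh(1/2)$ are less than $1$. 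Hence for each $k$ it suffices to verify the inequality string (1) for $\{a_n^{(k)}\}$.

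The crucial point is that, after the trigonometric (respectively hyperbolic) identities used in Propositions 3 and 4, the string (1) is a statement only about the consecutive pair of integers $m := n+k$ and $m+1$, through the two arguments $1/m$ and $1/(m+1)$. Replacing $n+2, n+3$ by $m, m+1$ throughout, the upper and lower inequalities in (1) become precisely the inequalities (4)--(5) (for $\tan$) and (6)--(7) (for $\sinh$), now indexed by $m$; the algebraic content is identical for every $k$. Now in Propositions 3 and 4 these inequalities were established for all $n \geq 0$, i.e. for $m = n+2$ ranging over all integers $\geq 2$. For general $k \geq 2$ the required pair has $m = n+k$ ranging over the integers $\geq k$, a subset of $\{m \geq 2\}$. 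Thus the truncated-Maclaurin estimates already proved in Propositions 3 and 4 cover every case needed here, and the corollary follows with no new computation.

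Equivalently, one may package the argument as an induction on $k$ via the shift-invariance result [5], the base case $k = 2$ being Propositions 3 and 4. Either way there is essentially no new analytic work, so the only step deserving care -- the sole obstacle, such as it is -- is confirming that the reduction of (1) to (4)--(5) and (6)--(7) through the addition formulas is a genuine identity valid for each $m \geq 2$, and that $a_0^{(k)} \leq 1$ really does hold at the smallest admissible index $k = 2$, where $a_0$ is largest. Both are immediate, so I expect the proof to be quite short.
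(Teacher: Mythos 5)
Your proposal is correct, and it is more detailed than anything in the paper: the paper states this corollary with no proof at all, though by analogy with Corollary 2 the intended argument is evidently the one-line appeal to the heredity result of [5] (hyponormality of the terraced matrix generated by $\{a_n\}$ passes to the matrix generated by the shifted sequence $\{a_{n+1}\}$), with Propositions 3 and 4 supplying the base case $k=2$ --- exactly the induction you sketch in your final paragraph. Your primary route is genuinely different and more self-contained: you observe that the inequalities (4)--(7) concern only the pair of arguments $1/m$ and $1/(m+1)$, that Propositions 3 and 4 establish them for all $m = n+2 \geq 2$, and that for general $k \geq 2$ the string (1) for $a_n = f(1/(n+k))$ requires only the subset of cases $m = n+k \geq k \geq 2$; combined with your verification of the side hypotheses of Corollary 1 (strict decrease to $0$, boundedness from $(n+1)a_n \to 1$, and $a_0 = f(1/k) \leq f(1/2) < 1$, with both $\tan(1/2)$ and $\sinh(1/2)$ indeed below $1$), Corollary 1 then applies directly to every $k$. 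What your direct route buys is independence from the external reference [5] at the cost of rechecking Corollary 1's hypotheses for each $k$; what the heredity route buys is brevity, since [5] transfers hyponormality itself and no re-verification is needed. One small point of care, which you handle correctly: your parenthetical about $k=1$ for $\sinh$ shows only that the \emph{method} fails there because $a_0 = \sinh(1) > 1$, not that the operator fails to be hyponormal --- consistent with the paper, which lists $a_n = \sinh(1/(n+1))$ among the open questions.
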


\section{Conclusion}  

In closing, we note the following questions, which have not been settled here. \vspace{1mm}

\noindent ($1$)  \hspace{1mm} Are the terraced matrices generated by the following sequences $\{a_n\}$ 
		
		\hspace{3mm}		  hyponormal operators on $\ell^2$?  

		 \hspace{6mm}		(a)   \hspace{1mm} $a_n = \sin(\frac{1}{n+1})$
		
		 \hspace{6mm}			(b)  \hspace{1mm}  $a_n= \tan^{-1} (\frac{1}{n+1})$  
		
		 \hspace{6mm}			(c) \hspace{1mm}  $a_n = \sinh(\frac{1}{n+1})$ 
		
		 \hspace{6mm}			(d)  \hspace{1mm}  $a_n= \sin^{-1} (\frac{1}{n+2})$     
		
 \vspace{1mm}
\noindent ($2$)   \hspace{1mm}   Does there exist a hyponormal terraced matrix generated by a 
		
				\hspace{3mm}	  positive sequence $\{a_n\}$ such that $\{(n+1)a_n\}$ is strictly decreasing?


\begin{thebibliography}{99}

\bibitem{1}  A. Brown, P. R. Halmos, and A. L. Shields, {\it Ces\`{a}ro operators}, Acta Sci. Math. (Szeged) { \bf 26} (1965), 125-137.

\bibitem{2}  G. Leibowitz, {\it Rhaly matrices}, J. Math. Anal. Appl. {\bf 128} (1987), no. 1, 272-286.

\bibitem{3}   H. C. Rhaly Jr., {\it Terraced matrices}, Bull. Lond. Math. Soc. {\bf 21} (1989), no. 4, 399-406.

\bibitem{4}   --------, {\it Hyponormal terraced matrices}, Far East J. Math. Sci. {\bf 5} (3) (1997), 425-428.  

\bibitem{5}  --------,   {\it A note on heredity for terraced matrices}, Gen. Math. {\bf 16} (1) (2008), 5-9.

\bibitem{6}   --------,   {\it Remarks concerning some generalized Ces\`{a}ro operators on $\ell^2$}, J. Chungcheong Math. Soc. \textbf{23} (2010), no. 3,  425-433.

\bibitem{7}   --------,  {\it A superclass of the posinormal operators},  New York J. Math. {\bf 20} (2014), 497-506.  This paper is available via http://nyjm.albany.edu/j/2014/20-28.html. 

\bibitem{8} W. A. Stein et al., Sage Mathematics Software (Version 6.10), The Sage Developers, 2015, http://www.sagemath.org.




\end{thebibliography}
\end{document}